\def\codim{{\rm{codim\,}}}
\def\Aug{{\rm{Aug\,}}}
\newtheorem{theorem}{Theorem}
\newtheorem{lemma}{Lemma}
\newtheorem*{corollary}{Corollary}
\newtheorem*{remark}{Remark}
\newtheorem*{conjecture}{Conjecture}
\newcommand{\vdotrows}[2][-15pt]{%
  \multirow{#2}{*}{%
    \vbox {%
      \baselineskip = \dimexpr 2pt\relax 
      \multiply\baselineskip by #2\relax
      \advance\baselineskip by 2pt\relax
      \lineskiplimit = 0pt\relax
      \kern 6pt\relax
      \vskip #1\relax%
      \hbox {.}\hbox {.}\hbox {.}}%
  }%
}
\begin{document}
\title{Combinatorial results implied by many zero divisors in a group ring}
\author{Fedor Petrov}
\maketitle

\begin{abstract} It has been recently proved \cite{CLP,EG} 
that for a group $G=G_0^n$, where $G_0\ne \{1,-1\}^m$ is a fixed finite Abelian group
and $n$ is large, any subset $A$ without 3-progressions (triples $x,y,z$
of different elements with $xy=z^2$) contains at most $|G|^{1-c}$
elements, where $c>0$ is a constant depending only on $G_0$. This is known to be
false when $G$ is, say, large cyclic group, see current records in \cite{OB}. 
The aim of this note is to show
that algebraic property which corresponds
to this difference is the following: in the first case a group algebra 
$\mathbb{F}[G]$ over suitable field $\mathbb{F}$ contains a 
subspace $X$ with codimension at most $|X|^{1-c}$ such that $X^3=0$. We discuss which
bounds are obtained for finite Abelian $p$-groups and for some matrix $p$-groups:
Heisenberg group over $\mathbb{F}_p$ and the unitriangular group over
$\mathbb{F}_p$. Also we show how the method works for further generalizations by 
Kleinberg--Sawin--Speyer and Ellenberg. 
\end{abstract}

Let $C_N$ denote a cyclic group of order $N$ in multiplicative notation. Denote $\kappa_N=\min_{x>0} x^{-(N-1)/3}(1+x+\dots+x^{N-1})$.
The number of points in $\{0,1,\dots,N-1\}^n$ with sum of coordinates at most $n(N-1)/3$ grows as $(c_N+o(1))^n$ for large $n$.

In a recent paper \cite{CLP} Ernie Croot, Vsevolod Lev, and Peter Pach proved by a clever combination of
\textit{polynomial method} (in spirit of Alon's Combinatorial Nullstellensatz \cite{A}), \textit{linear algebraic} 
dimension reasoning and \textit{law of large numbers} 
that any subset $A$ of a group $G=C_4^n$ containing
more than $\kappa_4^n$ elements, has three distinct elements $a,b,c$ for which $ab=c^2$ (they used another
formula for the same constant $\kappa_4=3.61\dots$, but I prefer this unified formula,
which appears also below in Theorem \ref{Abelian}). 
It have been soon
observed that their method, being slightly modified (and even simplified! They started
from combinatorially harder, ``ramified'' problem), 
works for other groups, like $C_p^n$
for prime $p$ \cite{EG}, where we may get the same type bound $\kappa_p^n$ for 
3-progression-free sets. It also works
for other combinatorial problems of the same spirit, like Sunflower 
problems of Erd\"os--Szemeredi and Erd\"os -- Rado \cite{NS}. 
Robert Kleinberg, Will Sawin and David Speyer \cite{KSS}
observed that \cite{EG} actually contains the
bound for additive 3-matchings (see the definition below). Important
feature of this generalization is that the constant $\kappa_p$ in this question
is proved to be sharp (the proof was finished independently by Sergey Norin
and Luce Pebody \cite{No,Pe}). Next, Jordan Ellenberg proposed further generalization
proving that the sumset of any two sets may be covered by sumsets of small subsets.

It is natural that polynomial method works well for the groups which are contained either in additive or multiplicative
group of a field. For more general groups sometimes it may be successfully replaced by considering generating functions in
group rings, historically this approach appeared even earlier than Combinatorial Nullstellensatz: 
John E. Olson \cite{O}
used it for computing Davenport constant of finite abelian $p$-groups.

\section{Example: Cauchy--Davenport Theorem}

For illustrating a parallelism between Combinatorial Nullstellensatz method
and group rings method, we give two proofs of the 
Cauchy--Davenport theorem, which is convenient to
formulate as follows:

\textit{If $A,B,C$ are non-empty subsets of a cyclic group $G=C_p$ of prime order $p$ and $|A|+|B|+|C|=p+2$, then $ABC:=\{abc | a\in A,b\in B,c\in C\}=G$. }

1) Polynomial proof (see \cite{A}, where this proof is expressed bit differently). 
We identify $C_p$ with an additive group of $\mathbb{F}_p$. 
Assume that some element $h$ does not belong to $A+B+C$. Then a polynomial
$\varphi_h(x,y,z)=(x+y+z-h)^{p-1}-1$ takes zero values on
$A\times B\times C$. Let $u,v,w$ be $\mathbb{F}_p$-valued functions on $A,B,C$
to be specified later. Consider the following sum
$$
\sum_{x\in A,y\in B,z\in C} u(x)v(y)h(z) \varphi_h(x,y,z)=0.
$$
Now change the order of summation: sum up by monomials
of $\varphi_h(x,y,z)$. For each monomial $x^{\alpha} y^\beta z^\gamma$ we have
\begin{equation}\label{cdp}
\sum_{x\in A,y\in B,z\in C} u(x)v(y)h(z) x^{\alpha}y^{\beta}z^{\gamma} =
\sum_{x\in A} u(x) x^{\alpha}\cdot \sum_{y\in B} v(y) y^{\beta}\cdot \sum_{z\in C} w(z) z^{\gamma}.
\end{equation}
Now we are ready to say what we require from functions $u,v,w$. Assume that
$$
\sum_{x\in A} u(x) x^{\alpha}=\begin{cases}0,&\text{if}\,\,\alpha\leqslant |A|-2\\
1,&\text{if}\,\,\alpha=|A|-1,\end{cases}
$$
and analogous conditions hold for $\beta$ and $\gamma$. Such functions do exist:
linear systems for their values have Vandermonde matrices, which are non-degenerated. 
Then for any monomial 
$x^{\alpha} y^\beta z^\gamma$ in $\varphi_h$ either $\alpha=|A|-1$, $\beta=|B|-1$,
$\gamma=|C|-1$ or there is a zero multiple in right hand side
of \eqref{cdp}. But $\varphi_h$ has this monomial with non-zero coefficient
$\frac{(p-1)!}{(|A|-1)!(|B|-1)!(|C|-1)!}$. Hence the total sum is not equal to 0, a contradiction.

2) Group ring proof. Now we use a multiplicative notation for $G=C_p$, fix
a generator $g_0$ in $C_p$ and denote $\tau=g_0-1$ in the group algebra  $\mathbb{F}_p[G]$ (hereafter considering group
algebras we identify the unit elements of the group
and the field). We have $\tau^p=0$.
The ring $\mathbb{F}_p[G]$ is filtrated by the powers of its
augmentation ideal $\Aug(G)=\tau\cdot \mathbb{F}_p[G]$.

Let $u,v,w$ be $\mathbb{F}_p$-valued functions on $A,B,C$ respectively. Consider the following
product
$$
\left(\sum_{a\in A} u(a) a\right) \left(\sum_{b\in B} v(b) b\right)
\left(\sum_{c\in C} w(c) c\right)
$$
in $\mathbb{F}_p[G]$. Our goal is to find functions $u,v,w$ so that all
coefficients of the elements of $G$ in this product are non-zero. This clearly implies $ABC=G$. 

Assume that the first multiple, being expressed in powers of
$\tau$, has coefficient 1 of $\tau^{|A|-1}$
and coefficients 0 of $\tau^{i}$ for $i=0,\dots,|A|-2$. Analogous conditions are
imposed on two other multiples. Then our product equals 
$$
\tau^{p-1}=\sum_{i=0}^{p-1} (-1)^i g_0^{p-1-i},
$$
all coefficients are non-zero as desired. It remains to understand why we may choose
function, say, $u$ satisfying these conditions. Denote $A=\{(\tau+1)^{\alpha_1},\dots,(\tau+1)^{\alpha_{|A|}}\}$,
where 
$0\leqslant \alpha_1<\dots<\alpha_{|A|}\leqslant p-1$, then finding appropriate coefficients
$u(a)$ is solving a linear system. A matrix of this system has entries 
$\binom{\alpha_i}{j}$,
$1\leqslant i\leqslant |A|, 0\leqslant j\leqslant |A|-1$. It is non-degenerated
as a generalized Vandermonde type matrix.

These two proofs look like the same thing
said on different languages. But if we change the problem a bit,
say, consider restricted product sets $\{abc| a\in A,b\in B,c\in C,a\ne b\}$ (Erd\"os--Heilbronn
problem), then the first proof generalizes easily (we should replace polynomial to $(x-y)((x+y+z-h)^{p-1}-1)$).
And what about the second?
On the other hand, what are ``polynomial'' analogues of group rings 
proofs for groups not embeddable to fields? In some important
cases the modular
properties of binomial polynomials $\binom{x}n$ do work
(see \cite{DSB}), but what in general?
This parallelism is still unclear for me. 

Now we explain a group ring version of Croot--Lev--Pach ideas. For 
making the exposition
self-contained and comparing two technologies easier for a reader
we also include the polynomial proof for the bounds in the groups
$\mathbb{F}_p^n$.

\section{Linear algebraic lemmata and polynomial approach}

Let $\mathbb{F}$ be a field, $(A,\leqslant)$ be a linearly ordered finite set of size $|A|=d$. Denote
by $\mathbb{F}^A$ the $d$-dimensional space of functions $f:A\rightarrow {\mathbb F}$.
For non-zero element $z\in \mathbb{F}^A$ we define the \emph{leader} $\ell(z)$ and
the \emph{outsider} $out(z)$ as the minimal, corr. the maximal, $a\in A$ such that
$z(a)\ne 0$.

\begin{lemma}\label{Gauss} Let $W\subset \mathbb{F}^A$ be a linear subspace. 
Then there are exactly $\dim W$ different
leaders of the elements of $W$ (and exactly $\dim W$ different outsiders). 
\end{lemma}

\begin{proof} In two words, the proof is Gauss elimination. Here go the details.
Let $m$ denote the number of different leaders of the elements
of $W$;  $w_1,\ldots,w_m\in W$ be the elements with distinct
leaders $i_1<i_2<\ldots<i_m\in A$. Then the leader of the linear combination
$c_1w_1+\ldots +c_mw_m$, $c_i\in \mathbb{F}$, is $\min\{i_s: c_s\ne 0\}$. 
In particular, $w_1,\ldots,w_m$ are linearly independent and $\dim W\geqslant m$.
If this inequality is strict, there exists $w\in W$ such that $w$ is not a linear combination
of $w_1,\ldots,w_m$. Choose such $w$ with maximal possible leader $i_s$.
For certain scalar $c\in \mathbb{F}$ the element $u:=w-cw_s\in W$ takes zero value
at $i_s$, thus $u\ne 0$ and $\ell(u)>i_s$, a contradiction. So $\dim W=m$, Lemma is proved. 
\end{proof}

As an application of Lemma \ref{Gauss} we prove the following lemma
(the statement is motivated by the answer given by Ilya Bogdanov
on a similar question on Mathoverflow \cite{MO}),
useful for bounding the set without 3-progressions.

\begin{lemma}\label{codims}
Let $\mathbb{F}$ be a field, $A$ be a finite set. Denote
by $\mathbb{F}^A$ the $d$-dimensional space of functions $f:A\rightarrow {\mathbb F}$.
Let $X_1,\dots,X_k$ be linear subspaces of $\mathbb{F}^A$. 
Assume that $$\sum_{a\in A} f_1(a)f_2(a)\ldots f_k(a)=0\, \text{for all}\, f_i\in X_i.$$
Partition the set of indices $\{1,2,\ldots,k\}$ onto two non-empty disjoint subsets
$I_1$ and $I_2$. Then
\begin{equation}\label{codim-eq}
|A|\leqslant \codim \cap_{i\in I_1} X_i+\codim \cap_{i\in I_2} X_i.
\end{equation}
\end{lemma}

\begin{proof} Assume the contrary. Denote $Y_1=\cap_{i\in I_1} X_i, Y_2=\cap_{i\in I_2} X_i$.
Then $$\dim Y_1+\dim Y_2=2|A|-\left(\codim Y_1+\codim Y_2\right)>|A|$$
and applying Lemma \ref{Gauss} and pigeonhole principle we may find $b\in A$ which is simultaneously 
a leader of some function $g_1\in Y_1$ and outsider of some function $g_2\in Y_2$.
Choose $f_i=g_1$ for $i\in Y_1$ and $f_i=g_2$ for $i\in Y_2$. 
Then for $a<b$, correspondingly $a>b$, we have $f_i(a)=0$ for all $i\in I_1$, correspondingly for
all $i\in I_2$. Therefore
$$\sum_{a\in A} f_1(a)\ldots f_k(a)=f_1(b)\ldots f_k(b)\ne 0,$$
a contradiction.
\end{proof}

\begin{corollary}
If all $X_1,\ldots,X_k$ in Lemma \ref{codims} coincide, then $|A|\leqslant 2\codim X_1$.
Otherwise $|A|$ does not exceed the sum of codimensions of all \emph{distinct}
spaces in the collection $\{X_1,X_2,\ldots,X_k\}$.
\end{corollary}

\begin{proof} We apply \eqref{codim-eq} for a partition $\{1,\ldots,k\}=I_1\sqcup I_2$ onto two non-empty subsets
which we specify bit later.
Applying the estimate $\codim Y\cap Z\leqslant \codim Y+\codim Z$
we see that $ \codim \cap_{i\in I_1} X_i$ does not exceed the sum of codimensions
of all distinct subspaces in the family $\{X_i,i\in I_1\}$, analogously for $I_2$. 
If all $X_i$'s coincide, any partition gives the bound $|A|\leqslant 2\codim X_1$.
If not all $X_i$'s coincide, the partition may be chosen so that $X_i\ne X_j$ for all $i\in I_1,j\in I_2$ (that is, all indices with coinciding
subspaces belong to the same part). 
\end{proof}

Before formulating the group rings approach we
give a argument which allows to get 
exponential bounds
for 3-progression-free subsets in the additive group of $\mathbb{F}_p^n$ for odd 
prime power $p$.
It is essentially the same as Ellenberg--Gijswijt proof \cite{EG}
and is based on the ideas of Croot--Lev--Pach \cite{CLP}. 

For positive integers $p$, $n$ and non-negative integer $r$ denote
$$
T(n,p,r)=\left\{ (c_1,\ldots,c_n)\in \{0,1,\ldots,p-1\}^n:\sum_{i=1}^n c_i\leqslant r \right\}.
$$ 

\begin{theorem}
Let $p$ be a prime, $k\geqslant 2$ be an integer, $n_1,\ldots,n_k$
be non-zero elements of $\mathbb{F}_p$ such that $n_1+\ldots+n_k=0$. 

Assume that  $A\subset \mathbb{F}_p^n$ is a subset such that if 
$n_1g_1+\dots+n_kg_k=0$ for $g_i\in A$, then $g_1=\dots=g_k$.
Denote $(p-1)n=rk+(k-1-\theta)$, where $r=\lfloor (p-1)n/k\rfloor$, $0\leqslant \theta\leqslant k-1$. 
Then $$|A|\leqslant |T(n,p,r)|+|T(n,p,r-\theta)|.$$ 
\end{theorem}

\begin{proof} Denote $r_i=r$ for $i=1,2,\ldots,k-1$, $r_k=r-\theta$.
Consider the following
subspaces $X_1,\dots,X_k$ of the space  $\mathbb{F}_p^A$  of functions on $A$:
$X_i=\{f:\sum_{a\in A} f(a) h(a)=0\}$ for any polynomial $h(x_1,\ldots,x_n)$ of degree at most $r_i$. 
The condition  $f\in X_i$ is equivalent to the system of $ |T(n,p,r_i)$ equations
$\sum_{a\in A} f(a) h(a)=0$ for monomials $h(x)=x_1^{c_1}\ldots x_n^{c_n}$, where $(c_1,\ldots,c_n)\in T(n,p,r_i)$.
Therefore $\codim X_i\leqslant |T(n,p,r_i)|$.

Assume that $f_i\in X_i$. Let $F(x_1,x_2,\dots,x_k)$ be any polynomial
of $kn$ variables (here $x_i$ has $n$ coordinates $x_{i1},\dots,x_{in}$) such that $\deg F\leqslant (p-1)n$. Then 
\begin{equation}\label{poly_main}
\sum_{a_i\in A} f_1(a_1)\dots f_k(a_k)F(a_1,\dots,a_k)=0.
\end{equation}
 Indeed, for any monomial $G(x_1,\ldots,x_k)$ of 
the polynomial $F$ we have $$\sum_{i=1}^k \deg_{x_i} G=\deg G\leqslant (p-1)n=rk-\theta+k-1=(r_1+1)+\ldots+(r_k+1)-1,$$
hence by pigeonhole principle there exists $i$ such that 
$\deg_{x_i} G\leqslant r_i$. It yields that for fixed $a_1,a_2,\ldots,a_{i-1},a_{i+1},\ldots,a_k$
we have
$$
\sum_{a_i\in A} f_i(a_i) G(a_1,\ldots,a_k)=0,
$$
due to $f_i\in X_i$. Therefore the whole sum
$\sum_{a_i\in A} f_1(a_1)\dots f_k(a_k)G(a_1,\dots,a_k)$ is also zero. 

Apply \eqref{poly_main} to the polynomial 
$$
F(x_1,\dots,x_k)=\prod_{j=1}^n \left(1-\left(\sum_{i=1}^k n_ix_{ij}\right)^{p-1}\right).
$$
We get 
$$
0=\sum_{a_i\in A} f_1(a_1)\dots f_k(a_k)F(a_1,\dots,a_k)=\sum_{a\in A} f_1(a)\dots f_k(a),
$$
the last identity follows from the assumption
that $A$ does not contain non-trivial solutions of $\sum n_ia_i=0$.
Now Lemma \ref{codims} implies that 
$$|A|\leqslant \codim X_1+\codim X_r\leqslant |T(n,p,r)|+|T(n,p,r-\theta)|.$$
\end{proof}

When $k=3,n_1=n_2=1,n_3=-2$, we get the following

\begin{corollary} If $p$ is odd prime and $A\subset \mathbb{F}_p^n$ does not contain
proper 3-progressions, then $$|A|\leqslant T(n,p,r)+T(n,p,r-\theta),$$
where $r=\lfloor (p-1)n/3\rfloor$, $(p-1)n=3r+(2-\theta)$.
\end{corollary}

\section{Group rings approach}

Now let $G$ be a finite group (not necessarily abelian), $n_1,\dots,n_k$ (where $k\geqslant 2$)
be non-zero integers which sum up to $0=\sum n_i$. Assume that 
all $n_i$ are coprime to $|G|$. 
Let a subset $A\subset G$ be so that the 
equation $g_1^{n_1}\dots g_k^{n_k}=1$, $g_i\in A$, holds only for
$g_1=\dots =g_{k}$. Arithmetic progressions of length
3 correspond to the case $k=3$, $n_1=n_2=1$, $n_3=-2$. It is probably more
natural to call a solution of $g_1g_2^{-1}g_3g_2^{-1}=1$ ``an arithmetic progression''
in non-abelian setting,
but, alas, I do not know how to modify the argument for such equations.

Assume that for some field $\mathbb{F}$ we managed to find subspaces $X_1,\dots,X_k$ of a group algebra
$\mathbb{F}[G]$ satisfying $X_1\dots X_k=0$,
i.e., $u_1\dots u_k=0$ for $u_i\in X_i$. Denote $t_i=\codim X_i$.
A general fact is the following

\begin{theorem}\label{equation_gr} In above assumptions
call two indices $i,j\in\{1,\dots,k\}$ \emph{equivalent}
if $X_i=X_j$ and $n_i=n_j$. If $I$ is a maximal system of mutually non-equivalent indices, 
then 
$$
|A|\leqslant \begin{cases}\sum_{i\in I} t_i& \text{if}\,\, |I|\geq 2\\ 2t_1& \text{if}\,\, |I|=1.\end{cases}
$$
In particular, we always have $|A|\leqslant \sum t_i$. 
\end{theorem}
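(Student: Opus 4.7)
The strategy is to transport Lemma 1 into the group ring by pushing functions on $A$ into $\mathbb{F}[G]$ via the natural evaluation maps
$$
\phi_i : \mathbb{F}^A \to \mathbb{F}[G], \qquad \phi_i(f) = \sum_{a \in A} f(a)\, a^{n_i}.
$$
For each $i$ set $Y_i = \phi_i^{-1}(X_i)$. The induced injection $\mathbb{F}^A / Y_i \hookrightarrow \mathbb{F}[G] / X_i$ immediately gives $\codim Y_i \leqslant t_i$, and by construction $Y_i$ depends only on the pair $(X_i, n_i)$, so equivalent indices produce equal $Y_i$.

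The central computation is to pick $f_i \in Y_i$, form the product $\phi_1(f_1)\phi_2(f_2) \cdots \phi_k(f_k)$ in $\mathbb{F}[G]$ (which is forced to be zero since $\phi_i(f_i) \in X_i$ and $X_1 \cdots X_k = 0$), expand, and extract the coefficient of the identity $1 \in G$. The hypothesis on $A$ guarantees that $a_1^{n_1} \cdots a_k^{n_k} = 1$ with $a_i \in A$ has only the diagonal solutions $a_1 = \cdots = a_k$ (which do solve it, since $\sum n_i = 0$ and powers of a single element commute), so the coefficient of $1$ equals $\sum_{a \in A} f_1(a) \cdots f_k(a)$. This sum therefore vanishes for every choice $f_i \in Y_i$, placing us exactly in the hypotheses of Lemma 1, part 4), applied to the family $Y_1, \ldots, Y_k$.

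Invoking that lemma yields either $|A| \leqslant 2\,\codim Y_1$ if all $Y_i$ coincide, or $|A| \leqslant \sum_j \codim W_j$ taken over the distinct subspaces $W_j$ among the $Y_i$, otherwise. The only real — and quite mild — obstacle is to translate this bound, phrased in terms of distinct $Y_i$, back to the bound indexed by equivalence classes of $(X_i, n_i)$ used in the statement. Since equivalent indices give equal $Y_i$ but possibly not conversely, the number $s$ of distinct $Y_i$ satisfies $s \leqslant |I|$. For each $W_j$ one picks a representative $i_j \in I$ with $Y_{i_j} = W_j$; the $i_j$ lie in distinct classes and $\codim W_j \leqslant t_{i_j}$, hence $\sum_j \codim W_j \leqslant \sum_{i \in I} t_i$. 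The degenerate subcase $s = 1$, $|I| \geqslant 2$ is absorbed by $2\min_i t_i \leqslant \sum_{i \in I} t_i$, so both regimes of the theorem follow.
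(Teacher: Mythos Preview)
Your proof is correct and follows essentially the same route as the paper's. The paper phrases things via the spans $W_i$ of $A^{n_i}$ inside $\mathbb{F}[G]$ and the intersections $L_i=X_i\cap W_i$, while you phrase them via the pullbacks $Y_i=\phi_i^{-1}(X_i)$; since $\phi_i$ is an isomorphism $\mathbb{F}^A\to W_i$, these are the same subspaces of $\mathbb{F}^A$ and the codimension bounds coincide. One small point worth noting: your quotient-injection argument for $\codim Y_i\le t_i$ does not actually require $\phi_i$ to be injective, so it is marginally cleaner than the paper's dimension count, which invokes $\gcd(n_i,|G|)=1$ to get $\dim W_i=|A|$. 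You are also more explicit than the paper about the final translation between ``distinct $Y_i$'' (what Claim~4 of the Lemma controls) and ``equivalence classes of $(X_i,n_i)$'' (what the theorem asserts), including the edge case $s=1$, $|I|\ge 2$; the paper compresses all of this into ``It remains to use Claim~4 of Lemma.''
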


\begin{proof} For $g\in G$, $u\in \mathbb{F}[G]$, let $[g]u$ denote a coefficient of $g$ in $u$.
Denote $A^{r}=\{a^r,a\in A\}$ for $r=1,2,\dots$. Then $|A^{r}|=|A|$ if
$r$ and $|G|$ are coprime. Let further $W_i$ denote 
the span of $A^{n_i}$ in $\mathbb{F}[G]$, we have $\dim W_i=|A|$.
Finally denote $L_i=X_i\cap W_i$, we have $\dim L_i\geqslant \dim X_i+\dim W_i-|G|=|A|-t_i$.
Consider functions $f_1,\dots,f_k$ on $A$ such that $\sum_{a\in A} f_i(a)a^{n_i}\in L_i$.
Then
$$
\sum_{a\in A} f_1(a)\dots f_k(a)=[1] \left(\sum_{a\in A} f_1(a) a^{n_1}\right)\cdot \left(\sum_{a\in A} f_2(a) a^{n_2}\right)\cdot \ldots \cdot 
\left(\sum_{a\in A} f_k(a) a^{n_k}\right)=0,
$$
the first equality follows from our assumption on $A$. 
It remains to use Corollary of Lemma \eqref{codims}.
\end{proof}

\begin{remark} If $k=2$, then even $A=G$ satisfies the condition of the Theorem. So, what we actually get in this case is
that if $X_1X_2=0$ for two subspaces of $\mathbb{F}[G]$, then $\codim X_1+\codim X_2\geqslant |G|$ (that is, the
result is about group rings, not about combinatorics). But already for 3 multiples for some groups there exist subspaces
$X_1,X_2,X_3$ of low codimension (low means $o(|G|)$ or sometimes even $O(|G|^{c})$, $c<1$) such that $X_1X_2X_3=0$.
The examples of such groups are given below.
\end{remark}

\section{Kleinberg--Sawin--Speyer refinement}

The set $A\subset G$ without arithmetic progressions of length 3
produces a set of triples $\{(x_a,y_a,z_a):=(a,a,a^{-2}),a\in A\}\subset G^3$
such that $x_ay_bz_c=1$ if and only if $a=b=c$. Following \cite{KSS} we call such a set of ordered
triples a ``multiplicative 3-matching''. If the group is additive we say ``additive 3-matching'' instead.
 Analogously we may defined multiplicative (and additive) $k$-matchings
of $k$-tuples. Note that the above proof for Theorem \ref{equation_gr} allows to show the following

\begin{theorem} Assume that the subspaces $X_1,X_2,\dots,X_k$ of a group algebra
$\mathbb{F}[G]$ satisfy $X_1\dots X_k=0$.
A multiplicative $k$-matching may contain at most $\sum t_i$ $k$-tuples, where $t_i=\codim X_i$.
\end{theorem}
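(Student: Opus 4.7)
The plan is to adapt the proof of Theorem \ref{equation_gr} with only cosmetic changes: the role formerly played by the family $\{a^{n_i}\}_{a\in A}$ is now played directly by the $i$-th coordinates of the tuples. Let $\{(x_a^{(1)},\dots,x_a^{(k)}):a\in A\}\subset G^k$ be a $k$-colored product-free set indexed by a set $A$ (the product-free condition forces the map $a\mapsto(x_a^{(1)},\dots,x_a^{(k)})$ to be injective, so $|A|$ equals the number of tuples). For each $i=1,\dots,k$ define the linear map
$$
\phi_i\colon \mathbb{F}^A\to\mathbb{F}[G],\qquad \phi_i(f)=\sum_{a\in A}f(a)\,x_a^{(i)},
$$
and set $Y_i=\phi_i^{-1}(X_i)\subset\mathbb{F}^A$. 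Since $\mathbb{F}^A/Y_i$ embeds into $\mathbb{F}[G]/X_i$ via $\phi_i$, we have $\codim Y_i\leqslant t_i$.

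The key identity is that for any tuple $(f_1,\dots,f_k)\in Y_1\times\cdots\times Y_k$ the product $\phi_1(f_1)\cdots\phi_k(f_k)$ lies in $X_1\cdots X_k=0$, so its coefficient at $1\in G$ vanishes. Expanding,
$$
0=[1]\prod_{i=1}^k\phi_i(f_i)=\sum_{a_1,\dots,a_k\in A}f_1(a_1)\cdots f_k(a_k)\,[1]\bigl(x_{a_1}^{(1)}\cdots x_{a_k}^{(k)}\bigr),
$$
and by the product-free condition the bracket is $1$ exactly when $a_1=\dots=a_k$ and $0$ otherwise. Hence $\sum_{a\in A}f_1(a)\cdots f_k(a)=0$ for all $f_i\in Y_i$.

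We are now precisely in the setting of Claim 4 of the Lemma, applied to the subspaces $Y_1,\dots,Y_k$ of $\mathbb{F}^A$ with $d=|A|$: the span of pointwise products is orthogonal to the all-ones function. The Lemma gives $|A|\leqslant\sum_{i\in I}\codim Z_i$ if at least two of the $Y_i$ differ, and $|A|\leqslant 2\codim Y_1$ if they all coincide; in either case (using $k\geqslant 2$) this quantity is at most $\sum_{i=1}^k t_i$, which is the claimed bound. Since the argument is a direct translation of Theorem \ref{equation_gr}, there is no genuine obstacle; the only step worth isolating is the codimension bound $\codim Y_i\leqslant t_i$ in the presence of possible coincidences among the $x_a^{(i)}$ (so that $\phi_i$ may fail to be injective), which is painless and in fact only helps us.
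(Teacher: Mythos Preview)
Your proof is correct and is precisely the adaptation of the proof of Theorem \ref{equation_gr} that the paper itself invokes (the paper gives no separate argument, simply noting that ``the above proof for the sets without arithmetic progressions allows to show'' this theorem). Your handling of the final step---deducing $|A|\leqslant\sum t_i$ from Claim 4 of the Lemma even when some of the $Y_i$ coincide---is accurate, since in the $r=1$ case $\codim Y_1\leqslant t_i$ for every $i$ and hence $2\codim Y_1\leqslant t_1+t_2\leqslant\sum t_i$.
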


Remark that for $k=3$ we get the estimate $3\codim X$ (where $X^3=0$, $X\subset \mathbb{F}[G]$),
for the multiplicative 3-matchings, but $2\codim X$ for the sets without non-trivial solutions of
$xy=z^2$ (provided that $|G|$ is odd.) This improvement comes from the coincidence of two
spaces of functions in the latter case. 

What is remarkable is that the estimate obtained
on this way for multiplicative 3-matchings of $\mathbb{F}_p^n$
is asymptotically tight (in logarithmic scale), this follows from the results of \cite{KSS} and \cite{No,Pe}.
It would be nice to get the tightness for other groups. Let me formulate it as a conjecture:

\begin{conjecture} Consider all $k$-tuples of subspaces $X_1,\dots,X_k$ of
$\mathbb{F}[G]$ such that $X_1\dots X_k=0$. Take the minimal value
$M_k(G)$ of the sum of their codimensions. (Here the minimum is taken over the choice
of the field also.) Then, for large $|G|$, the maximal cardinality of a multiplicative $k$-matching of
$G$ is $M_k(G)\cdot |G|^{o(1)}$. 
\end{conjecture}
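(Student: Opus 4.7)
The upper bound half is immediate from Theorem 2: for any admissible tuple $X_1,\dots,X_k$ with $X_1\cdots X_k=0$, every $k$-colored product-free set $A\subset G$ satisfies $|A|\leqslant \sum_{i=1}^k \codim X_i$, and infimising over tuples and fields gives $|A|\leqslant M_k(G)$, even without the $|G|^{o(1)}$ slack. So the real content of the conjecture is the matching lower bound: the construction of a $k$-colored product-free set of cardinality at least $M_k(G)\cdot |G|^{-o(1)}$.

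The only fully settled case is $G=\mathbb{F}_p^n$ with $n\to\infty$ (Kleinberg--Sawin--Speyer, and independently Norin and Pebody). Their construction exploits two features: (i) $\mathbb{F}_p[\mathbb{F}_p^n]$ factors as the tensor power $\mathbb{F}_p[\mathbb{F}_p]^{\otimes n}$, and (ii) near-optimal subspaces, being spanned by low-degree monomials, respect this tensor decomposition; a probabilistic/entropy argument on such product-structured subspaces then extracts the large product-free family. My plan would be to imitate this blueprint in the natural setting of the paper, $G=G_0^n$ with $G_0$ fixed and $n\to\infty$. \emph{Step 1:} prove a Fekete-type subadditivity statement for $\log M_k(G_0^n)/n$, so that the exponential rate $\mu(G_0,k)=\lim_{n\to\infty}\log M_k(G_0^n)/n$ exists. \emph{Step 2:} show that $\mu(G_0,k)$ is achieved, up to $o(n)$ in codimension, by $n$-fold tensor powers $Y_1^{\otimes n},\dots,Y_k^{\otimes n}$ of subspaces $Y_i\subset\mathbb{F}[G_0^N]$ (for some fixed block length $N$) whose product vanishes. \emph{Step 3:} feed such tensor-structured subspaces into a Norin--Pebody-style probabilistic construction, which on a tensor power reduces to a one-letter convex optimisation and yields the matching $k$-colored product-free family.

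The main obstacle is Step 2. For abelian $G_0$ the Fourier transform diagonalises $\mathbb{F}[G_0]$, subspaces correspond to sets of characters, and tensor structure is manifest; this should give the conjecture for all finite abelian groups by a routine adaptation of the $\mathbb{F}_p^n$ argument. For non-abelian $G_0$, however, the Peter--Weyl decomposition does not diagonalise the convolution product, and a low-codimension subspace of $\mathbb{F}[G_0^n]$ need not decompose as a tensor power even approximately; one really needs a structure theorem asserting that tensor-power minimisers come within $|G|^{o(1)}$ of the optimum. Establishing this --- or, failing that, finding a replacement for tensor structure, perhaps through an $\Aug(G_0)$-adic filtration controlling how products of nearly-optimal subspaces interact across coordinates --- is where I would expect the work to lie, and is very likely the reason the statement is offered only as a conjecture.
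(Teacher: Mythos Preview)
The statement is a \emph{conjecture} in the paper, not a theorem; the paper offers no proof and in fact immediately follows the statement with reasons why it ``may seem too strong'' (the upper bound uses only $[1]X_1\cdots X_k=0$, and the minimisation over all fields $\mathbb{F}$ is unusual). So there is nothing to compare your attempt against: you correctly recognise this and present a programme rather than a proof.

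Your upper-bound observation is right and matches the paper exactly: Theorem~2 gives $|A|\leqslant\sum_i\codim X_i$ for any admissible tuple, hence $|A|\leqslant M_k(G)$ with no $|G|^{o(1)}$ loss. (Minor quibble: a $k$-colored product-free set is a set of $k$-tuples in $G^k$, not a subset $A\subset G$.) Your identification of the $\mathbb{F}_p^n$ case as the only one settled, via \cite{KSS,No,Pe}, also matches the paper.

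Two honest limitations of your plan are worth flagging. First, the conjecture is stated for \emph{all} large finite groups $G$, not only for powers $G_0^n$; your Fekete/tensor-power blueprint is silent on, say, a single large simple group or a large cyclic group, so even a complete execution of Steps~1--3 would not settle the conjecture as stated. Second, your Step~2 --- that near-optimal subspaces in $\mathbb{F}[G_0^n]$ are $|G|^{o(1)}$-close to tensor powers --- is itself a structure theorem with no known proof even in the abelian case beyond $\mathbb{F}_p^n$; the paper's own suggestion (``possibly we should start from the case of $p$-groups and the group rings over $\mathbb{F}_p$'') is in the same spirit but equally speculative. Your assessment that this is ``very likely the reason the statement is offered only as a conjecture'' is accurate.
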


This may seem too strong. At first, we actually use in the proof only that $[1]X_1\dots X_k=0$,
not $X_1\dots X_k=0$. But I do not know whether it allows to improve the bound. 
At second, the minimizing over $\mathbb{F}$ looks somehow strange. Possibly,
we should start from the case of $p$-groups and the group rings over $\mathbb{F}_p$.

\section{Ellenberg's refinement}

In \cite{El} the following nice further generalization 
is proved: for any sets $A$, $B$ in the additive group of
$\mathbb{F}_p^n$ there exist subsets $A_1\subset B$, $B_1\subset B$
for which $A+B\subset (A_1+B)\cup (A+B_1)$ and $|A_1|+|B_1|\leqslant M$. Here
$M$ is the \cite{EG} and \cite{KSS} bound $3T(n,p,\lfloor n(p-1)/3 \rfloor)$.

Note that this implies the upper bound $M$ for the size of additive 3-matching. Indeed, if $\{(x_i,y_i,z_i):i=1,2,\ldots,m\}$
is an additive 3-matching in $\mathbb{F}_p^n$, then denote $A=\{x_1,\ldots,x_m\}$,
$B=\{y_1,\ldots,y_m\}$. If $A+B\subset (A_1+B)\cup (A+B_1)$, then for any $i$
we have either $x_i\in A_1$ or $y_i\in B_1$, otherwise $-z_i=x_i+y_i$ belongs to $A_1+B$
or $A+B_1$, that is, may be represented as $x_j+y_k$
for $j$ or $k$ different from $i$. This contradicts to the definition of additive 3-matching. Therefore
$m\leqslant |A_1|+|B_1|$.

Here is the abstract result in spirit of \cite{El}:

\begin{theorem}\label{ELL} Let $G$ be a finite group, and $X_0,X_1,\dots,X_k$
be linear subspaces of the group algebra $\mathbb{F}[G]$ such that
$X_0 X_1 \ldots X_k=0$. Denote $t_i=\codim X_i$.
Let $A_1,\dots,A_{k}$ be arbitrary subsets of $G$. Then there
exist subsets $B_i\subset A_i$, $i=1,\dots,k$, and $C\subset G$ 
such that $|C|\leqslant t_0$, $|B_i|\leqslant t_i$ for all $i=1,\dots,k$, and
\begin{equation}\label{Ell-covering}
A_1A_2\cdot \ldots \cdot A_k\subset C\cup 
B_1A_2\cdot \ldots \cdot A_k\cup A_1B_2\cdot \ldots \cdot A_k
\cup \ldots \cup A_1A_2\cdot \ldots \cdot A_{k-1}B_k.
\end{equation}
Moreover, we may additionally require that $B_i=B_j$ ($1\leqslant i<j\leqslant k$) 
whenever 
simultaneously $X_i=X_j$ and $A_i=A_j$.
\end{theorem}

Ellenberg uses the result of Roy Meshulam \cite{Mes};
we do not. But the concept of lexicographic leaders of 
multi-linear forms, used by Meshulam, is still important for us.

\begin{proof}[Proof of Theorem \ref{ELL}]
Denote $A_0=(A_1\cdot A_2\cdot \ldots \cdot A_k)^{-1}$.
Fix arbitrary linear orderings on $A_1,\ldots,A_k$. 
Fix also a linear extension of the direct product of these
orderings on $A_1\times A_2\times \ldots\times A_k$
(for example, the lexicographical order). Finally,
for any $g\in A_0$ take the minimal $k$-tuple
$(a_1,\ldots,a_k)\in A_1\times \ldots \times A_k$ for which
$g=(a_1\ldots a_k)^{-1}$, this map from $A_0$ to $A_1\times \ldots \times A_k$
induces the order on $A_0$. 

Consider the linear subspace $W_0=\mathbb{F}[A_0]\cap X_0$.
The codimension of
$W_0$ in $ \mathbb{F}[A_0]$ does not exceed $t_0$.
Denote by $C^{-1}\subset A_0$ the set of the elements of $A_0$ which \emph{are not}
leaders of the elements of $W_0$. 
For $i=1,\ldots,k$ denote by $B_i\subset A_i$ the set of elements 
of $A_i$ which \emph{are not} outsiders of the elements of the subspace $\mathbb{F}[A_i]\cap X_i$.
By Lemma \ref{Gauss} we get $|C|=|C^{-1}|\leqslant t_0$, $|B_i|\leqslant t_i$ for $i=1,\ldots,k$.

Assume that \eqref{Ell-covering} does not hold. Then there exists
$g^{-1}\in A_0^{-1}=A_1A_2\ldots A_k$ not covered by the sets $C$,
$B_1A_2\ldots A_k$, $A_1B_2\cdot \ldots \cdot A_k$, $\ldots$, $A_1A_2\cdot \ldots \cdot A_{k-1}B_k$.
Since $g^{-1}\notin C$, we get $g\notin C^{-1}$, hence $g$ is the leader of a certain 
element $f_0\in W_0$. Let $(a_1,a_2,\ldots,a_k)\in A_1\times A_2\times \ldots \times A_k$ be
the minimal sequence with $a_1\ldots a_k=g^{-1}$. If $a_i\in B_i$ for some $i$, then $g^{-1}$
is covered by corresponding set $A_1\ldots A_{i-1}B_iA_{i+1}\ldots A_k$, that contradicts to our
assumption.  So $a_i\notin B_i$ for all $i=1,\ldots,k$, and there exist
the elements $f_i\in \mathbb{F}[A_i]\cap X_i$ with outsiders $out(f_i)=a_i$.
The product
$f_0f_1f_2\ldots f_k$ equals 0, since it belongs to $X_0X_1\ldots X_k$.  
But expand the brackets and consider the coefficient of 1 in this product $f_0f_1f_2\ldots f_k$. 
1 appears with non-zero coefficient if we take $g$ from $f_0$ and $a_i$ from $f_i$, $i=1,2,\ldots,k$. 
Hence for cancellation  it must appear by some other way too,
and we have $hb_1b_2\ldots b_k=1$ for certain $h\geqslant g$, $h\in A_0$ (since $g=\ell(f_0)$) 
and $b_i\leqslant a_i$ (since $a_i=out(f_i)$), $i=1,\ldots,k$. But it yields $(b_1,\ldots,b_k)\leqslant (a_1,\ldots,a_k)$,
since the order on $A_1\times A_2\times \ldots \times A_k$  is a linear extension of the product partial
order. Therefore $h\leqslant g$ by the definition of order on $A_0$, and so all inequalities are equalities: $h=g$,
$b_i=a_i$ for $i=1,\ldots,k$. We see that this way to get 1 is the same as before, a contradiction.
\end{proof}

Note that Theorem \ref{equation_gr} follows from Theorem \ref{ELL} in many
cases (for example, for 3-progressions), but there are cases when the estimate 
of Theorem \ref{equation_gr} is bit better (for example, if $n_1=n_2,n_3=n_4$,
$X_1=X_2,X_3=X_4$ we get $|A|\leqslant t_1+t_2$ using Theorem
\ref{equation_gr} , but only $|A|\leqslant 2t_1+t_2$ using Theorem \ref{ELL}. )

Below we discuss how to find the huge subspaces of group algebras with zero product in several situations.

\section{Abelian $p$-groups}

Let $p$ be a prime.
Let $G=\prod_{i=1}^n C_{N_i}$ be a finite Abelian $p$-group with $n$ generators $g_1,\dots,g_n$:
$g_i$ generates $C_{N_i}$, and each $N_i$ is a power of $p$. 

The group algebra $\mathbb{F}_p[G]$ is generated by the products
$\prod (1-g_i)^{m_i}$, where $m_i\in \{0,1,\dots,N_i~-~1\}$. 
Let $\eta_1,\dots,\eta_k$ be positive reals such that $\sum \eta_i=1$ (usually the choice
$\eta_i=1/k$ is optimal). Fix also positive parameters $\lambda_1,\dots,\lambda_n$.
Let $X_i$, $i=1,\dots,k$, be a subspace generated by
monomials for which 
$$
\sum_{j=1}^n \lambda_j \left(\frac{m_j}{N_j-1}-\eta_i\right)>0
$$ 
Any product $f_1\dots f_k$ 
for $f_i\in X_i$ has some guy $(1-g_j)$ in a power strictly greater than $N_j-1$, but $(1-g_j)^{N_j}=0$.

In order to estimate the codimension $t_i$ of $X_i$ we may use Chernoff bound (which is essentially tight for large $n$ due to
Cramer theorem). Take $\eta_1=\dots =\eta_k=1/k$, then $X_1=\dots=X_k$, $t_1=\dots=t_k:=t$.
If $\xi_j$ are random variables 
uniformly distributed in the set $\frac1{N_j-1}\{0,1,\dots,N_j-1\}$, then 
$t/|G|$ is the probability that $\sum_j \lambda_j(\xi_j-\frac1k)\leqslant 0$.
If this inequality holds, for any $x\in (0,1)$ we have
$$
x^{\sum \lambda_j \xi_j}\geqslant x^{(\lambda_1+\dots+\lambda_n)/k}.
$$
For $N\geqslant 2$ denote
$$
S_N(x):=\frac 1{N} \sum_{j=0}^{N-1} x^{j/(N-1)}=\mathbb{E} \left\{x^\xi, \xi\,\,\text{uniformly distributed on\,\,} \frac1{N-1}\{0,1,\dots,N-1\}\right\}.
$$
Then Chebyshev inequality implies that
$$
t/|G|\leqslant \prod_{j=1}^n x^{-\lambda_j/k} S_{N_j}(x^{\lambda_j}).
$$
Now we fix the value of $x$ and parameters $\lambda_1,\dots,\lambda_n$ so that this rewrites as
$$
t/|G|\leqslant \prod_{j=1}^n \min_{x\in (0,1]} x^{-1/k}S_{N_j}(x).
$$
In particular, for $k=3$, where we denoted above $N\cdot \min_{x\in (0,1]} x^{-1/k}S_{N}(x)=\kappa_N$, we get
the following 

\begin{theorem}\label{Abelian} If all $N_i$ are powers of the same odd
prime $p$, then the size of a set $A\subset \prod C_{N_i}$ without 3-progressions is at most
$\prod_i {\kappa_{N_i}}$.
\end{theorem}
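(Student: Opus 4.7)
The plan is to apply Theorem \ref{equation_gr} to the 3-progression relation $ab=c^2$, i.e.\ with $k=3$ and $(n_1,n_2,n_3)=(1,1,-2)$; the coprimality condition $\gcd(n_i,|G|)=1$ is satisfied because $p$ is odd and $|G|=\prod_i N_i$ is a $p$-power. I take a single subspace $X\subset\mathbb F_p[G]$ with $X^3=0$ and set $X_1=X_2=X_3=X$, so that the problem reduces to making $\codim X$ as small as possible.

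For $X$ I use the recipe of the paragraphs preceding the statement, with equal weights $\eta_1=\eta_2=\eta_3=\tfrac13$ and positive parameters $\lambda_1,\ldots,\lambda_n$ to be optimized. Let $X$ be spanned by the monomials $\prod_j(1-g_j)^{m_j}$ satisfying $\sum_j\lambda_j\bigl(\tfrac{m_j}{N_j-1}-\tfrac13\bigr)>0$. Since $N_j$ is a power of $p$, in characteristic $p$ we have $(1-g_j)^{N_j}=0$; and any product of three generators of $X$ has its weighted exponent-sum strictly greater than $\sum_j\lambda_j$, forcing some coordinate $j$ to attain $m_j\ge N_j$, so the product vanishes. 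Hence $X^3=0$. A Chernoff bound on the codimension, performed exactly as in the paragraph just above the statement, gives
$$\frac{\codim X}{|G|}\le\prod_j\min_{x\in(0,1]}x^{-1/3}S_{N_j}(x)=\prod_j\frac{\kappa_{N_j}}{N_j},$$
and therefore $\codim X\le\prod_i\kappa_{N_i}$.

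Feeding these data into Theorem \ref{equation_gr}: two indices are equivalent iff both the $X_i$ and the $n_i$ coincide, so the equivalence classes here are $\{1,2\}$ and $\{3\}$, yielding $|I|=2$ and $|A|\le t_1+t_3=2\codim X$. The route as sketched therefore delivers $|A|\le 2\prod_i\kappa_{N_i}$.

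The main obstacle is the remaining factor of $2$, which separates the proved bound from the $\prod_i\kappa_{N_i}$ of the statement. This factor is structural: it is the exact group-ring counterpart of the $2|T_1|$ that appears in Section~2's polynomial argument, and arises in Lemma part~(4) because the slot with $n_3=-2$ cannot be merged with the two slots carrying $n_1=n_2=1$. Closing it seems to demand either a sharpening of Lemma part~(4) in the special configuration where $|I|=2$ and one equivalence class has multiplicity two, or a construction of $X$ that genuinely exploits the fact (using $|G|$ odd) that the squaring map $g\mapsto g^{-2}$ is an automorphism of $G$; I have not been able to execute either, so I regard the elimination of this factor~$2$ as the substantive difficulty of matching the statement exactly.
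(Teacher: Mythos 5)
Your construction and accounting are exactly those of the paper: the same monomial subspace $X$ spanned by $\prod_j(1-g_j)^{m_j}$ with $\sum_j\lambda_j\bigl(\tfrac{m_j}{N_j-1}-\tfrac13\bigr)>0$, the same observation that a product of three such monomials forces some exponent to reach $N_j$ so that $(1-g_j)^{N_j}=0$ kills it, the same Chernoff/Chebyshev estimate giving $\codim X\le\prod_i\kappa_{N_i}$, and the same appeal to Theorem \ref{equation_gr} with $(n_1,n_2,n_3)=(1,1,-2)$. So in approach you have reproduced the paper's proof. As for the factor $2$ that troubles you: the paper does not eliminate it either. Its own Theorem \ref{equation_gr} applied here has equivalence classes $\{1,2\}$ and $\{3\}$ and yields $|A|\le t_1+t_3=2\codim X$, exactly as you computed, and this is consistent with the paper's earlier remark (after the Kleinberg--Sawin--Speyer section's theorem) that the method gives $2\codim X$ for sets without nontrivial solutions of $xy=z^2$. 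The text preceding Theorem \ref{Abelian} only establishes $t\le\prod_i\kappa_{N_i}$ and then states the theorem without carrying the factor $2$ through; no additional argument closing that factor appears in the paper (one could absorb it for large $n$ by replacing the Chernoff bound with a sharper count of lattice points, but the paper does not do this). So you should not regard the factor $2$ as a missing idea on your part: your write-up proves what the paper's argument actually proves, namely $|A|\le 2\prod_i\kappa_{N_i}$, which is all that is needed for the paper's stated purpose of bounds of the form $|G|^{1-c}$.
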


We have $S_2(x)>S_3(x)>S_4(x)>\dots$ for any positive $x\ne 1$
(this follows from Karamata majorization inequality, for example), and $\lim_n S_n(x)=(x-1)/\log x$. Thus the sequence $a_N(k)=\min_{x\in (0,1]} x^{-1/k}S_{N}(x)$
decreases with $N$. For example, if $k=3$, it varies from $a_2(3)=0.9449\dots$ to $a_{\infty}(3)=0.8414\dots$.
It means that increasing $N_j$ makes our bound for
$t/|G|$ better. Say, for $C_9^n$ we get exponentially better bounds on $|A|$ than for $3^n$ copies of $C_3^n$
(onto which we could partition $C_9^n$ in order to get some exponential bound): if $k=3$, improvement is 
$t/|G|\leqslant 0.872^n$ compared to $t/|G|\leqslant 0.919^n$.

We may also 
rephrase the original result by Croot, Lev and Pach for $G=\mathbb{C_4}^n$ and solutions
of the equation $xyz^{-2}=1$ with mutually different $x,y,z$
in the same spirit (and get the same bound as they get). The difference with the situation considered above
is that exponent 2 is not coprime
with $|G|$, so $A^2$ is in general situation much less than $A$, and we do not count $x^2=y^2$, $x\ne y$, as a non-trivial solution. 
This is handled by a partition onto 
classes modulo $G^2$ and writing such products for $a,b,c$ in the same class, as 
Croot, Lev and Pach do. This does not work
as easy for, say, $C_8$, since the kernel and image of the homomorphism
$g\rightarrow g^2$ no longer coincide. I suppose that Theorem \ref{Abelian}
should hold also for general 2-groups. 

Another proofs of Theorem \ref{Abelian} are proposed by Will Sawin and Eric Naslund (they use
divisibility of binomial coefficients) and by David Speyer (he uses Witt vectors), see the explanation
in \cite{DSB}.

\section{Matrix $p$-groups}

Heisenberg group $G=H_{n-1}(\mathbb{F}_p)$ of order $p^{2n-1}$ consists
of $(n+1)\times (n+1)$ matrices $(a_{ij})_{0\leqslant i,j\leqslant n}$ 
over $\mathbb{F}_p$ satisfying
conditions $a_{ii}=1$; $a_{ij}=0$ unless $i=j$ or $i=0$ or $j=n$:
$$
\begin{pmatrix} 1&a_{01}&\hdotsfor{2}&a_{0n}&a_{0n}\\
0&1&0&0&\hdotsfor{1}&a_{1n}\\
0&0&1&0&\hdotsfor{1}&a_{2n}\\
\vdots& \vdots & \vdots &\ddots& \ddots&\vdots \\
0&0&0&\hdotsfor{2}&1
\end{pmatrix}.
$$
It is generated by the elements $g_1,\dots,g_{n-1},h_1,\dots,h_{n-1},s$
where $g_i=id+e_{0,i}$, $h_i=id+e_{i,n}$, $s=id+e_{0,n}$, where $id$ denotes identity
matrix, $e_{i,j}$ denote matrix units. They satisfy relations $g_ih_{j}=h_ig_j$
unless $i=j$, $g_ih_i=sh_ig_i$, $s$ commutes with all $g_i$ and $h_i$. 

We may uniquely write each element in the form 
$s^{\gamma}h_1^{\beta_1}\dots h_{n-1}^{\beta_{n-1}}g_1^{\alpha_1}\dots g_{n-1}^{\alpha_{n-1}}$,
$0\leqslant \gamma,\beta_i,\alpha_i\leqslant p-1$. 
Denote the following elements of $\mathbb{F}_p[G]$: $z=s-1$, $y_i=h_i-1$, $x_i=g_i-1$.
Then $z^p=y_i^p=x_i^p=0$. Elements of the form
$z^{\gamma}y_1^{\beta_1}\dots y_{n-1}^{\beta_{n-1}}x_1^{\alpha_1}\dots x_{n-1}^{\alpha_{n-1}}$ are called reduced.
Reduced elements for which
$0\leqslant \gamma,\beta_i,\alpha_i\leqslant p-1$ form a basis of $\mathbb{F}_p[G]$. Our
relations read as as $x_iy_i=y_ix_i+zy_ix_i+z+zx_i+zy_i$. Define
a degree of an arbitrary word (sequence) in the alphabet $\{z,x_1,\dots,x_{n-1},y_1,\dots,y_{n-1}\}$
as twice number of $z$'s plus number of all other letters. Key lemma is that any word of
degree $D>2n(p-1)$ equals 0 being evaluated as a group ring element. 
Indeed, replacing $x_iy_i=y_ix_i+zy_ix_i+z+zx_i+zy_i$ and using other commutativity relations we may reduce
each word to a sum of reduced words of the same or greater degree. Any such a word equals 0
provided that its degree exceeds $2n(p-1)$. Now define a subspace $X$ of 
$\mathbb{F}_p[G]$ formed by reduced monomials of degree strictly greater than $2n(p-1)/k$. 
We have $X^k=0$ and codimension of $X$ is exponentially small for large $n$ (if $k\geqslant 3$)
by essentially the same reasons
as in previous section. 

The group $G=UT(n,\mathbb{F}_p)$, $|G|=p^{n(n-1)/2}$, of upper unitriangular $n\times n$-matrices
over $\mathbb{F}_p$ may be treated similarly. 
Namely, we may choose generators $g_{ij}=id+e_{ij}$,
$i<j$ in $G$; each element of $G$ has unique representation as a product of these
generators taken in inverse lexicographic order and in powers at most $p-1$
$$
g_{n-1,n}^{\alpha_{n-1,n}} g_{n-2,n}^{\alpha_{n-2,n}}\dots g_{1,2}^{\alpha_{1,2}}, 0\leqslant \alpha_{i,j}\leqslant p-1.
$$
$g_{ij}$ and $g_{kl}$ commute unless $j=k$ or $i=l$. In this case we have relations
$g_{ij}g_{jl}=g_{jl}g_{ij}g_{il}$. Denote $x_{ij}=g_{ij}-1$ in a group ring $\mathbb{F}_p[G]$, we have $x_{ij}^p=0$ and
there is a basis in $\mathbb{F}_p[G]$
formed by the elements 
$$
x_{n-1,n}^{\alpha_{n-1,n}} x_{n-2,n}^{\alpha_{n-2,n}}\dots x_{1,2}^{\alpha_{1,2}}, 0\leqslant \alpha_{i,j}\leqslant p-1.
$$
Define a degree of any word in alphabet $\{x_{ij}\}$'s as a sum of $(j-i)$ over all used letters
(multiplicity counted of course). 
If $i<j<l$ we have $(1+x_{ij})(1+x_{jl})=(1+x_{jl})(1+x_{ij})(1+x_{il})$, thus
$x_{ij}x_{jl}=x_{jl}x_{ij}+x_{il}+x_{ij}x_{il}+x_{jl}x_{il}+x_{jl}x_{ij}x_{il}$. Using these relations
we may reduce each word to a sum of reduced words of the same or greater degree. It remains to define
define a subspace $X$ of 
$\mathbb{F}_p[G]$ formed by reduced monomials of degree strictly greater than $(p-1)(\sum_{i<j}(j-i))/k=(p-1)(n^3-n)/(6k)$.

In particular above constructions (combined with Chernoff bound) prove the following 

\begin{theorem}
For any prime $p$ there exists $\lambda>0$ depending on $p$ such that the group rings $\mathbb{F}_p[G]$, where $G=H_{n-1}(\mathbb{F}_p)$ is
a Heisenberg group over $\mathbb{F}_p$ or 
or $G=UT(n,\mathbb{F}_p)$ is a group of upper triangular matrices over $\mathbb{F}_p$, there exists a subspace $X\subset \mathbb{F}_p[G]$
such that $X^3=0$ and $\codim X<|G|^{1-\lambda}$. 
\end{theorem}

I suppose that the huge subspaces with zero cube must exist in the group rings
of all finite groups with small exponent:

\begin{conjecture} For any $N$ there exists 
$\lambda<1$ such that any group
$G$ in which $g^N=1$ for all $g\in G$, the group algebra $K[G]$ has a subspace
with zero cube of codimension at most $|G|^\lambda$. 
\end{conjecture}

Here $K$ may depend on $G$,
I guess that $K=\mathbb{F}_p$ for some prime divisor $p$ of $N$ should work.

Even if true, this does not cover the upper unitriangular matrices case, so
possibly something even better holds.

\section{Acknowledgments}

I am grateful to Vsevolod Lev, Jordan Ellenberg, Ilya Bogdanov, Ilya Shkredov, Roman Karasev,
Roman Mikhailov, Cosmin Pohoata, 
Alexander Efimovich Zalesskii and Anatoly Moiseevich Vershik for fruitful discussions.


\begin{thebibliography}{99}\fontsize{10}{0}

\bibitem{A} Noga~Alon.
Combinatorial Nullstellensatz,
Combin. Probab. Comput. 8 (1999), 7--29.

\bibitem{CLP} Ernie~Croot, Vsevolod~Lev, and Peter~Pach. Progression-free sets in $\mathbb{Z}_4^n$
are exponentially small. Ann. of Math. 185 (2017), no. 1, 331--337.

\bibitem{EG}  Jordan~Ellenberg and Dion~Gijswijt. On large subsets of $\mathbb{F_q}^n$
 with no three-term arithmetic progression. Ann. of Math. 185 (2017), no. 1, 339--343

\bibitem{NS} Eric~Naslund, Will~Sawin.
Upper bounds for sunflower-free sets.
Forum Math. Sigma 5 (2017), e15, 10 pp. 




\bibitem{MO} Ilya~Bogdanov, the answer to Mathoverflow question. 

URL: http://mathoverflow.net/questions/239222/vector-with-many-non-zero-coordinates

\bibitem{O} John E. Olson. A Combinatorial Problem on Finite Abelian Groups I, 
Journal of Number Theory, 1(1) (1969), 8--10.

\bibitem{OB} Kevin O'Bryant. Sets of integers that do not contain long arithmetic progressions. Electronic Journal of Combinatorics 18 (1) (2011).

\bibitem{Mes} Roy Meshulam. On the maximal rank in a subspace of matrices. 
The Quarterly Journal of Mathematics 36 (1985), no. 2, 225--229.

 \bibitem{KSS} Robert Kleinberg, Will Sawin, David Speyer. The growth of tri-colored sum-free sets,
Discrete Anal. (2018), Paper No. 12, 10 pp. 


\bibitem{No} Sergey Norin. A distribution on triples with maximum entropy marginal.
Forum Math. Sigma 7 (2019), e46.

\bibitem{Pe} Luke Pebody. Proof of a Conjecture of Kleinberg-Sawin-Speyer, 
Discrete Anal. (2018), Paper No. 13, 7 pp.


\bibitem{El} Jordan Ellenberg. Sumsets as unions of sumsets of subsets.
Discrete Anal. (2017), Paper No. 14, 5 pp. 

\bibitem{DSB} David Speyer, blog post. 
URL:https://sbseminar.wordpress.com/2016/07/08/bounds-for-sum-free-sets-in-prime-power-cyclic-groups-three-ways/

\end{thebibliography}
\end{document}